\def\NoSVN{y}
\newcommand{\fref}[1]{\prettyref{#1}}
\newcommand{\mynewthm}[3][]{%
  \newtheorem{#2}[thmnum]{#3}
  \newtheorem*{#2*}{#3}%
  \newrefformat{#2}{#3~\ref{##1}}%
}
\newtheorem*{clm}{Claim}
\newenvironment{clmprf}{%
  \begin{proof}[Proof of claim]%
  }{\end{proof}}
\let\xxx=\frametitle
\def\frametitle#1{%
  \xxx{%
    \setbeamercolor*{math text}{use={titlelike,my math text},fg=titlelike.fg!80!my math text.fg}%
    #1}%
  \setbeamercolor{math text}{use=my math text,fg=my math text.fg}%
}
\newcommand{\beamerenv}[3]{%
\newenvironment<>{#1}%
{%
  \setbeamercolor{temp}{structure}%
  \setbeamercolor{structure}{fg=#2}%
  \setbeamercolor{block body}{use=structure,bg=structure.fg!5!white}%
  \begin{#3}%
}%
{\end{#3}\setbeamercolor{structure}{temp}}}
\newcommand{\mynewthm}[3][green!50!black]{%
  \newtheorem*{#2x}{#3}%
  \beamerenv{#2}{#1}{#2x}%
}
\newcommand{\myiffrench}[2]{#2}
\newcommand{\myiffrench}[2]{\iflanguage{french}{#1}{#2}}
\theoremstyle{plain}
\theoremstyle{definition}
\theoremstyle{remark}
\newcommand{\myenumlabel}[1]{\textnormal{(\roman{#1})}}
\newcounter{cycprfcnt}
\newcommand{\cycprfpreamble}%
{%
  \setcounter{cycprfcnt}{1}
  \setlength{\itemindent}{0.5\leftmargin}%
  \setlength{\leftmargin}{0pt}%
  \newcommand{\cpcurr}{\myenumlabel{cycprfcnt}}%
  \newcommand{\cpnext}{\addtocounter{cycprfcnt}{1}\cpcurr}%
  \newcommand{\impnext}{\cpcurr{} $\Longrightarrow$ \cpnext.}%
}%
\qedhere\end{list}}%
\newenvironment{cycprf*}%
{\begin{list}{\impnext}%
  {\cycprfpreamble}}%
{\end{list}}%
\def\indsym#1#2{%
  \setbox0=\hbox{$\m@th#1x$}%
  \kern\wd0%
  \hbox to 0pt{\hss$\m@th#1\mid$\hbox to 0pt{$\m@th#1^{#2}$\hss}\hss}%
  \lower.9\ht0\hbox to 0pt{\hss$\m@th#1\smile$\hss}%
  \kern\wd0}
\def\nindsym#1#2{%
  \setbox0=\hbox{$\m@th#1x$}%
  \kern\wd0%
  \hbox to 0pt{\hss$\m@th#1\not$\kern1.4\wd0\hss}
  \hbox to 0pt{\hss$\m@th#1\mid$\hbox to 0pt{$\m@th#1^{#2}$\hss}\hss}%
  \lower.9\ht0\hbox to 0pt{\hss$\m@th#1\smile$\hss}%
  \kern\wd0}
\def\dotminussym#1#2{%
  \setbox0=\hbox{$\m@th#1-$}%
  \kern.5\wd0%
  \hbox to 0pt{\hss\hbox{$\m@th#1-$}\hss}%
  \raise.6\ht0\hbox to 0pt{\hss$\m@th#1.$\hss}%
  \kern.5\wd0}
\renewcommand{\setminus}{\smallsetminus}
\newcommand{\rest}{{\restriction}}
\newcommand{\sfrac}[2]{\hbox{$\frac{#1}{#2}$}}
\newcommand{\half}[1][1]{\sfrac{#1}{2}}
\DeclareMathOperator{\id}{id}
\DeclareMathOperator{\Iso}{Iso}
\DeclareMathOperator{\img}{img}
\newcommand{\bG}{\mathbf{G}}
\newcommand{\bR}{\mathbf{R}}
\newcommand{\bU}{\mathbf{U}}
\DeclareMathOperator{\AEx}{AE}
\DeclareMathOperator{\Lip}{Lip}
\begin{document}

\title{The linear isometry group of the Gurarij space is universal}

\author{Itaï \textsc{Ben Yaacov}}

\address{Itaï \textsc{Ben Yaacov} \\
  Université Claude Bernard -- Lyon 1 \\
  Institut Camille Jordan, CNRS UMR 5208 \\
  43 boulevard du 11 novembre 1918 \\
  69622 Villeurbanne Cedex \\
  France}

\urladdr{\url{http://math.univ-lyon1.fr/~begnac/}}

\thanks{Research supported by the Institut Universitaire de France and ANR contract GruPoLoCo (ANR-11-JS01-008).}

\thanks{The author wishes to thank Vladimir V.\ \textsc{Uspenskij} for an inspiring discussion, and the organisers of «~Descriptive Set Theory in Paris 2012~» for facilitating this discussion.}

\svnInfo $Id: Katetov.tex 1382 2012-07-31 17:33:38Z begnac $
\thanks{\textit{Revision} {\svnInfoRevision} \textit{of} \today}

\keywords{Gurarij space ; Katětov function ; Arens-Eells space ; universal Polish group}
\subjclass[2010]{46B99 ; 22A05 ; 54D35}

\begin{abstract}
  We give a construction of the Gurarij space, analogous to Katětov's construction of the Urysohn space.
  The adaptation of Katětov's technique uses a generalisation of the Arens-Eells enveloping space to metric space with a distinguished normed subspace.

  This allows us to give a positive answer to a question of Uspenskij, whether the linear isometry group of the Gurarij space is a universal Polish group.
\end{abstract}

\maketitle

\section*{Introduction}

Let $\Iso(X)$ denote the isometry group of a metric space $X$, which we equip with the topology of point-wise convergence.
For complete separable $X$, this makes of $\Iso(X)$ a Polish group.
We let $\Iso_L(E)$ denote the linear isometry group of a normed space $E$ (throughout this paper, over the reals).
This is a closed subgroup of $\Iso(E)$, and is therefore Polish as well when $E$ is a separable Banach space.

It was shown by Uspenskij \cite{Uspenskij:UrysohnIsometryGroup} that the isometry group of the Urysohn space $\bU$ is a \emph{universal Polish group}, namely, that any other Polish group is homeomorphic to a (necessarily closed) subgroup of $\Iso(\bU)$, following a construction of $\bU$ due to Katětov \cite{Katetov:UniversalMetricSpaces}.
The Gurarij space $\bG$ (see \fref{dfn:Gurarij} below, as well as \cite{Gurarij:UniversalPlacement,Lusky:UniqueGurarij}) is, in \emph{some ways}, the analogue of the Urysohn space in the category of Banach spaces.
Either one is the unique separable, universal and approximately ultra-homogeneous object in its respective category, or equivalently, either one is the (necessarily unique) \emph{Fraïssé limit} (see \cite{BenYaacov:MetricFraisse}) of the finitely generated objects in its respective category.
This raises a natural question, put to the author by Uspenskij, namely whether $\Iso_L(\bG)$ is a universal Polish group as well.
Before discussing this question, let us briefly recall Katětov's construction and Uspenskij's argument for the Urysohn space.

Let $X$ be a metric space.
We say that a real-valued function $\xi$ on $X$ is \emph{Katětov} if $\xi(x) \leq \xi(y) + d(x,y)$ and $d(x,y) \leq \xi(x) + \xi(y)$ for all $x,y \in X$ (equivalently, $\xi(x) \geq | \xi(y) - d(x,y) |$ for all $x,y \in X$) (equivalently, if $\xi$ is the distance function from points in $X$ to some fixed point in a metric extension of $X$).
Let $K(X)$ denote the set of Katětov functions on $X$.
Equipped with the supremum distance, $K(X)$ is a complete metric space, endowed with a natural isometric embedding $X \hookrightarrow K(X)$, sending $x \mapsto d(x,\cdot)$.
This construction is functorial, in the sense that an isometric embedding $Y \hookrightarrow X$ gives rise to a natural isometric embedding $K(Y) \hookrightarrow K(X)$, where $\xi$ goes to $\hat \xi(x) = \inf_{y \in Y} d(x,y) + \xi(y)$, and everything composes and commutes as one would expect.
This functoriality gives rise to a natural embedding $\Iso(X) \hookrightarrow \Iso\bigl( K(X) \bigr)$, where the image of $\varphi \in \Iso(X)$ (call it $K(\varphi)$) extends $\varphi$.

There are two issues with $K(X)$ which make it less useful than one might hope.
First, even when $X$ is separable, $K(X)$ need not be separable.
More generally, letting $w(X)$ denote the weight of $X$ (namely the minimal size of a base), we only have $w(K(X)) \leq |K(X)| \leq 2^{w(X)}$, and equality may occur.
Second, since a point in $K(X)$ may ``depend'' on infinitely many points in $X$, the embedding $\Iso(X) \hookrightarrow \Iso\bigl( K(X) \bigr)$ need not be continuous.
The solution to both problems is to restrict our consideration to the Katětov functions which essentially only depend on finitely many elements, in the following sense.
When $Y \subseteq X$, let us identify $K(Y)$ with its image in $K(X)$, and define
\begin{gather}
  \label{eq:KatetovZero}
  K_0(X) = \overline{\bigcup_{Y \subseteq X \text{ finite}} K(Y)} \subseteq K(X).
\end{gather}
Then we have $X \hookrightarrow K_0(X)$ and this is functorial as for $K(X)$, yielding an embedding $\Iso(X) \hookrightarrow \Iso\bigl( K_0(X) \bigr)$.
Moreover, $w(K_0(X)) = w(X)$, and the embedding of topological groups above is continuous.
We now define $X_0 = X$, $X_{n+1} = K_0(X_n)$ and $X_\omega = \widehat{\bigcup X_n}$.
If $X$ is separable then so is $X_\omega$, and the latter is a Urysohn space.
At the same time we obtain continuous, and in fact homeomorphic, embeddings $\Iso(X) \subseteq \Iso(X_n)$ and therefore $\Iso(X) \subseteq \Iso(X_\omega) = \Iso(\bU)$, whence the desired result.
We refer the reader to \cite[Section~3]{Uspenskij:SubgroupsOfMinimalTopologicalGroups} for more details.

It turns out that the same strategy works for the Gurarij space as well.
In \fref{sec:ConvexKatetov} we identify the space of isomorphism types of one point extensions of a Banach space $E$ (in model-theoretic terminology we would speak of \emph{quantifier-free $1$-types} over $E$) as the space $K_C(E)$ of \emph{convex} Katětov functions.
The passage from $K$ to $K_0$ carries over essentially unchanged, in that $K_{C,0}(E) = K_C(E) \cap K_0(E)$ will do.
The main technical difficulty lies in the fact that, while $K(X)$ is a metric space extension of $X$, $K_C(E)$ is \emph{not} a Banach space extension of $E$.
In \fref{sec:ArensEells} we show that $K_C(E)$ embeds canonically in a Banach space extension of $E$.
In fact we prove a stronger result, generalising the Arens-Eells construction from pointed metric spaces to metric spaces over a normed space.
We conclude in \fref{sec:Main}, constructing, for a separable Banach space $E$, a chain of separable Banach space extensions as above, with limit $E_\omega$, and show that $E_\omega \cong \bG$, with only marginally more complexity than for pure metric spaces.

\section{Convex Katětov functions}
\label{sec:ConvexKatetov}

\begin{dfn}
  \label{dfn:ConvexKatetov}
  Let $E$ be a normed space, $X \subseteq E$ convex.
  We define $K_C(X) \subseteq K(X)$ to consist of all convex Katětov functions.
  We also define $K_{C,0}(X) = K_0(X) \cap K_C(X)$, where $K_0$ is as per \fref{eq:KatetovZero}.
\end{dfn}

We start by observing that $K_C(E)$ is the space of isomorphism types of one-point extensions of $E$.

\begin{lem}
  \label{lem:ConvexKatetov}
  Given $\xi \in K_C(E)$, let $E(x) = E \oplus \bR x$, and for $\alpha x - a \in E(x)$ define
  \begin{gather*}
    \|\alpha x - a\|^\xi =
    \begin{cases}
      |\alpha| \xi(a/\alpha) & \alpha \neq 0 \\
      \|a\| & \alpha = 0.
    \end{cases}
  \end{gather*}
  The map $\xi \mapsto \|{\cdot}\|^\xi$ is a bijection between $K_C(E)$ and semi-norms on $E(x)$ extending $\|{\cdot}\|_E$, whose inverse sends $\|{\cdot}\|$ to $\|x-\cdot\|$.
\end{lem}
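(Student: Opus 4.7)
The plan is to verify three things in order: (i) that for any $\xi \in K_C(E)$, the formula $\|{\cdot}\|^\xi$ defines a seminorm on $E(x)$ extending $\|{\cdot}\|_E$; (ii) that for any seminorm $\|{\cdot}\|$ on $E(x)$ extending $\|{\cdot}\|_E$, the function $\xi(a) := \|x-a\|$ lies in $K_C(E)$; and (iii) that the two constructions are mutually inverse. The fact that $\|{\cdot}\|^\xi$ extends $\|{\cdot}\|_E$ is built into the $\alpha = 0$ clause, and $\xi \geq 0$ follows from the Katětov inequality $d(a,a) = 0 \leq 2\xi(a)$.

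Most of the work is in (i), and within that the triangle inequality
\[
  \|(\alpha x - a) + (\beta x - b)\|^\xi \leq \|\alpha x - a\|^\xi + \|\beta x - b\|^\xi.
\]
Absolute homogeneity $\|tu\|^\xi = |t|\|u\|^\xi$ for $t \in \bR$ is a direct substitution (with $t>0$, $t=0$, $t<0$ handled separately). For the triangle inequality I would split on the signs of $\alpha, \beta, \alpha+\beta$. \emph{When $\alpha, \beta$ have the same sign}, absolute homogeneity reduces matters to $\alpha, \beta > 0$, where, with $\lambda = \alpha/(\alpha+\beta)$, one writes $(a+b)/(\alpha+\beta) = \lambda(a/\alpha) + (1-\lambda)(b/\beta)$ and invokes convexity of $\xi$. \emph{When exactly one is zero}, say $\beta = 0$, dividing by $|\alpha|$ reduces the claim to $\xi(a/\alpha + b/\alpha) \leq \xi(a/\alpha) + \|b/\alpha\|$, an instance of the Katětov inequality $\xi(c) \leq \xi(c') + \|c - c'\|$. \emph{When $\alpha + \beta = 0$ with $\alpha \neq 0$}, it reduces after dividing by $|\alpha|$ to $\|c - c'\| \leq \xi(c) + \xi(c')$ with $c = a/\alpha$, $c' = -b/\alpha$. \emph{The remaining mixed-sign cases with $\alpha + \beta \neq 0$} are handled by combining both Katětov inequalities: one bounds $\xi((a+b)/(\alpha+\beta)) \leq \xi(a/\alpha) + \|(a+b)/(\alpha+\beta) - a/\alpha\|$, then controls the residual distance $\|(a+b)/(\alpha+\beta) - a/\alpha\|$ using the second Katětov inequality applied to $a/\alpha$ and $-b/\beta$.

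For (ii), convexity of $\xi(a) = \|x - a\|$ is inherited from convexity of the seminorm through the affine substitution $a \mapsto x - a$. The two Katětov inequalities are just the seminorm triangle inequality applied in different groupings: $\|x - a\| \leq \|x - b\| + \|b - a\|$ gives the first, while $\|a - b\| = \|(x - b) - (x - a)\| \leq \|x - a\| + \|x - b\|$ gives the second.

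For (iii), starting with $\xi \in K_C(E)$, the definition gives $\|x - a\|^\xi = 1 \cdot \xi(a/1) = \xi(a)$, so construction (ii) applied to $\|{\cdot}\|^\xi$ recovers $\xi$. Conversely, starting with a seminorm $\|{\cdot}\|$ on $E(x)$ extending $\|{\cdot}\|_E$ and setting $\xi(a) = \|x - a\|$, absolute homogeneity of $\|{\cdot}\|$ yields $|\alpha|\xi(a/\alpha) = |\alpha|\|x - a/\alpha\| = \|\alpha x - a\|$ for $\alpha \neq 0$, and the $\alpha = 0$ case is the extension property. I expect the main obstacle to be the sign bookkeeping in step (i); each subcase is elementary, but it is worth noting that convexity of $\xi$ is genuinely used only in the same-sign case, the other cases being controlled purely by the two Katětov inequalities.
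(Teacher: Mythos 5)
Your proposal is correct and takes essentially the same approach as the paper, whose entire proof consists of your step (i): a case split on the signs of $\alpha$ and $\beta$ for the subadditivity inequality, using convexity exactly in the equal-sign case and the two Katětov inequalities in the remaining cases, while your steps (ii) and (iii) are treated there as immediate ("all there is to show is that $\|{\cdot}\|^\xi$ is a semi-norm"). The only blemish is a sign slip in your mixed-sign case: the second Katětov inequality should be applied to the points $a/\alpha$ and $b/\beta$ (note $\beta < 0$), not $-b/\beta$, since $\|\beta x - b\|^\xi = |\beta|\,\xi(b/\beta)$.
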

\begin{proof}
  All there is to show is that if $\xi \in K_C(E)$ then $\|{\cdot}\|^\xi$ is indeed a semi-norm, and for this it will suffice to show that $\|\alpha x - a\|^\xi + \|\beta x - b\|^\xi \geq
  \|(\alpha+\beta)x - (a+b)\|^\xi$.
  We consider several cases:
  \begin{enumerate}
  \item If both $\alpha$ and $\beta$ are zero then there is nothing to show.
  \item If $\alpha$ and $\beta$ are non zero with equal sign, say $0 < \alpha,\beta$ and $\alpha + \beta = 1$, then we use convexity: $\alpha \xi(a/\alpha) + \beta(b/\beta) \geq \xi(\alpha a/\alpha + \beta b/\beta) = \xi(a+b)$.
  \item If $\alpha = -\beta$, say $\alpha = 1$, then this is $\xi(a) + \xi(-b) \geq \|a+b\| = d(a,-b)$, which follows from $\xi$ being Katětov.
  \item The last case is when $\alpha$ and $\beta$ have distinct signs and absolute values, say $\beta \leq 0 < \alpha$, and we may assume that $\alpha + \beta = 1$.
    Then indeed, using the hypothesis that $\xi$ is Katětov, if $\beta \neq 0$ then
    \begin{align*}
      \alpha \xi(a/\alpha) - \beta \xi(b/\beta)
      &
      = \xi(a/\alpha) - \beta \bigl[ \xi(a/\alpha) + \xi(b/\beta) \bigr]
      \\ &
      \geq \xi(a/\alpha) - \beta \|a/\alpha - b/\beta\|
      \\ &
      = \xi(a/\alpha) + \|a/\alpha - (a+b)\|
      \geq \xi(a+b).
    \end{align*}
    When $\beta = 0$ (and $\alpha = 1$), what we need to show is just the last inequality above, namely $\xi(a) + \|b\| \geq \xi(a+b)$.
    \qedhere
  \end{enumerate}
\end{proof}

From a model theoretic point of view, set $K_C(E)$ may therefore be identified with the space of quantifier-free $1$-types over $E$ (see \cite{BenYaacov-Henson:Gurarij}).
In addition, the notions of convex and Katětov functions are compatible in the following sense.

\begin{lem}
  \label{lem:ConvexKatetovCompatible}
  Let $E$ be a normed space, $X \subseteq Y \subseteq E$ convex.
  Then
  \begin{enumerate}
  \item The inclusion $K(X) \subseteq K(Y)$ restricts to $K_C(X) \subseteq K_C(Y)$ (in other words, the natural Katětov extension of a convex Katětov functions is convex as well).
  \item For $\xi \in K(X)$ we define the generated convex function $\xi^C\colon X \rightarrow \bR$ to be the greatest convex function lying below $\xi$, namely
    \begin{gather*}
      \xi^C(x) = \inf_{n ; \, \bar y \in A(x,n)} \sum \xi(y_k)/n, \qquad \text{ where } A(x,n) = \bigl\{ \bar y \in X^n\colon x = \sum y_i/n \bigr\}.
    \end{gather*}
    Then $\xi^C$ is Katětov as well.
  \end{enumerate}
  Moreover, if $X \subseteq Y \subseteq E$ are convex and $\xi \in K(X)$, then passing from $\xi$ to $\xi^C$ (on $X$) and then extending to $Y$ gives the same result as first extending $\xi$ to $Y$ and then passing to $\xi^C$, so we may refer to $\xi^C \in K_C(Y)$ without ambiguity.
\end{lem}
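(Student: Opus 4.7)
I would split the proof into the three components.

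For (i), the Katětov extension is $\hat\xi(y) = \inf_{x \in X}[\xi(x) + d(y, x)]$; to verify convexity at $y_1, y_2 \in Y$ with $t \in [0, 1]$, pick $\epsilon > 0$ and choose $x_i \in X$ nearly optimal for $\hat\xi(y_i)$. Since $X$ is convex, $x := tx_1 + (1-t)x_2 \in X$; bounding $\hat\xi(ty_1+(1-t)y_2) \leq \xi(x) + \|ty_1+(1-t)y_2 - x\|$ and applying the convexity of $\xi$ on $X$ together with convexity of the norm gives $\hat\xi(ty_1+(1-t)y_2) \leq t\hat\xi(y_1) + (1-t)\hat\xi(y_2) + \epsilon$.

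For (ii), convexity of $\xi^C$ is immediate from the construction. The reverse triangle $d(x,y) \leq \xi^C(x) + \xi^C(y)$ is clean: take nearly optimal $\bar z \in A(x, n)$ and $\bar w \in A(y, n)$ (equalising denominators by repetition), apply Katětov's reverse triangle for $\xi$ pointwise, then average using convexity of the norm, $\sum[\xi(z_k) + \xi(w_k)]/n \geq \sum d(z_k, w_k)/n \geq d(x, y)$. The $1$-Lipschitz inequality $\xi^C(y) \leq \xi^C(x) + d(x, y)$ is the technical heart. When $X = E$ it is immediate by translation: the points $u_k := z_k + (y-x)$ automatically lie in $E$, $\bar u \in A(y, n)$, and the Lipschitz property of $\xi$ gives $\xi(u_k) \leq \xi(z_k) + \|y-x\|$. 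For a proper convex $X \subsetneq E$ the translates may leave $X$, and I would reduce to the $X = E$ case: extend $\xi$ to $\hat\xi \in K(E)$ by the Katětov formula, so that the easy case yields $\hat\xi^C$ Katětov on $E$. The restriction $\hat\xi^C|_X$ is convex and bounded above by $\hat\xi|_X = \xi$, hence $\hat\xi^C|_X \leq \xi^C$. The opposite inequality on $X$ is the crux: given a nearly optimal $\bar y \in E^n$ for $\hat\xi^C(x)$, approximate each $y_k$ by $x_k \in X$ with $\xi(x_k) + d(y_k, x_k) \leq \hat\xi(y_k) + \epsilon/n$, so that $\bar x$ has barycenter $x + \Delta \in X$ with $\|\Delta\|$ controlled by the total transport cost; a careful correction argument that exploits the already-established $1$-Lipschitz property of $\hat\xi^C$ on $E$ absorbs the discrepancy $\Delta$ and yields $\xi^C(x) \leq \hat\xi^C(x) + O(\epsilon)$. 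Thus $\xi^C = \hat\xi^C|_X$ is Katětov.

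The ``moreover'' clause is then a standard two-inequality argument. One direction, $\widehat{\xi^C} \leq (\hat\xi)^C$, holds because $\widehat{\xi^C}$ is a convex Katětov function on $Y$ by (i) and is bounded above by $\hat\xi$ (since $\xi^C \leq \xi$ on $X$). Conversely, $(\hat\xi)^C|_X$ is convex and $\leq \xi$ on $X$, hence $\leq \xi^C$; the Katětov property of $(\hat\xi)^C$ on $Y$ then gives $(\hat\xi)^C(y) \leq (\hat\xi)^C(x) + d(y, x) \leq \xi^C(x) + d(y, x)$ for each $x \in X$, and taking the infimum over $x$ yields $(\hat\xi)^C \leq \widehat{\xi^C}$. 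The main obstacle throughout is the $1$-Lipschitz direction of (ii) for proper convex $X$, which is precisely where the reduction to the ambient space $E$ is forced.
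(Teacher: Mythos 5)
Your part (i), your reverse-triangle argument in (ii), and your two-inequality treatment of the ``moreover'' clause are correct and consistent with the paper: the paper proves (i) by exactly your computation (stated for midpoints), and its entire proof of (ii) is the single remark that one ``takes convex combinations of the two inequalities defining the Katětov property'' --- which, for the Lipschitz inequality, is precisely your translation argument $u_k = z_k + (y-x)$. You are right, and more careful than the paper here, that this argument needs the translates to remain in $X$, so it establishes the Lipschitz half of (ii) only when $X = E$ (or, more generally, when $X$ is an affine subspace).

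However, your repair for proper convex $X$ has a genuine --- and in fact unfixable --- gap at exactly the step you flag as the crux. First, the circularity: your construction yields, in effect, $\xi^C(x+\Delta) \leq \hat\xi^C(x) - \|\Delta\| + O(\epsilon)$ with $x + \Delta \in X$, and to deduce $\xi^C(x) \leq \hat\xi^C(x) + O(\epsilon)$ you must move from $x+\Delta$ back to $x$ at cost $\|\Delta\|$, i.e.\ you need $\xi^C$ to be $1$-Lipschitz on $X$ --- the very statement being proved; the $1$-Lipschitz property of $\hat\xi^C$ on $E$ relates values of $\hat\xi^C$, not of $\xi^C$, so it cannot absorb $\Delta$. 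Second, the claimed identity $\xi^C = \hat\xi^C|_X$, and indeed part (ii) itself, are false for proper convex $X$. Take $E = \mathbb{R}^2$ euclidean, $X$ the triangle with vertices $a = (0,0)$, $b = (1,0)$, $c = (0,1)$, and $\xi(w) = 2 - \|w\|$. Then $\xi \in K(X)$: it is $1$-Lipschitz, and $\xi(w) + \xi(w') \geq 2 \geq \diam X$. Since $a$ is an extreme point of $X$, every tuple in $A(a,n)$ is constantly $a$, so $\xi^C(a) = \xi(a) = 2$; on the other hand $m = (b+c)/2$ gives $\xi^C(m) \leq [\xi(b)+\xi(c)]/2 = 1$. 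As $\|a-m\| = 1/\sqrt{2} < 1$, the function $\xi^C$ is not $1$-Lipschitz, hence not Katětov; and since $\hat\xi^C$ \emph{is} $1$-Lipschitz on $E$, we get $\hat\xi^C(a) \leq 1 + 1/\sqrt{2} < 2 = \xi^C(a)$, so your identity fails at $a$. The upshot is that no correction argument can close your gap: the lemma in the stated generality is false, and both your proof and the paper's one-line proof are valid exactly in the translation-invariant case $X = E$ (or $X$ affine) --- which, fortunately, is the only case the paper actually uses later, namely $K_{C,0}(E)$ in the main construction.
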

\begin{proof}
  For the first item, if $x,y \in Y$ and $\xi \in K_C(X)$, then
  \begin{gather*}
    \half[ \xi(x) + \xi(y) ] = \inf_{x',y' \in X} \half[ \|x-x'\| + \xi(x') + \|y-y'\| + \xi(y') ] \geq \inf_{x',y' \in X} \| \half[x+y] - \half[x'+y'] \| + \xi( \half[x'+y'] ) \geq \xi(\half[x+y]).
  \end{gather*}
  For the second item, we just take convex combinations of the two inequalities defining the Katětov property.
  The moreover part is a direct calculation.
\end{proof}

Our next step is to show that the intersection $K_C(X) \cap K_0(X)$ is not ``too small''.

\begin{lem}
  \label{lem:ConvexKatetovZero}
  For every normed space $E$ and convex subset $X \subseteq E$, we have
  \begin{gather*}
    K_{C,0}(X) = \overline{\bigcup \bigl\{ K_C(Y)\colon \text{convex compact } Y \subseteq X \bigr\} } = \bigl\{ \xi^C\colon \xi \in K_0(X) \bigr\}.
  \end{gather*}
\end{lem}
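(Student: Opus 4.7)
The plan is to prove the cycle of inclusions $M \subseteq L$, $L \subseteq R$, $R \subseteq M$, where $L$, $M$, $R$ denote the left, middle and right members of the displayed equality. A preliminary observation: $K_0(X)$ is closed in $K(X)$ by construction, and $K_C(X)$ is closed because both convexity and the Katětov inequalities pass to uniform limits; hence $L = K_{C,0}(X)$ is closed in $K(X)$. So for $M \subseteq L$ it suffices to show that $K_C(Y) \subseteq L$ for every convex compact $Y \subseteq X$. Given such $Y$ and $\xi \in K_C(Y)$, I identify $\xi$ with its Katětov extension $\hat\xi \in K(X)$, which is convex by the first item of \fref{lem:ConvexKatetovCompatible}. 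To prove $\hat\xi \in K_0(X)$, I exploit total boundedness of $Y$: for each $\epsilon > 0$, fix a finite $\epsilon$-net $Y_\epsilon \subseteq Y$ and let $\hat\xi_\epsilon \in K(X)$ be the Katětov extension of $\xi \rest Y_\epsilon$. Then $\hat\xi_\epsilon \in K_0(X)$ by definition, and a short estimate using the $1$-Lipschitz property of $\xi$ yields $\hat\xi \leq \hat\xi_\epsilon \leq \hat\xi + 2\epsilon$ pointwise on $X$, so $\hat\xi \in K_0(X) \cap K_C(X) = L$.

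The inclusion $L \subseteq R$ is immediate: any $\xi \in L$ is convex, hence equals $\xi^C$, and lies in $K_0(X)$ by hypothesis. For $R \subseteq M$, write $\xi = \lim_n \xi_n$ uniformly in $K_0(X)$, with each $\xi_n \in K(X)$ the Katětov extension of a Katětov function on a finite set $F_n \subseteq X$. Set $\tilde Y_n = \Conv(F_n)$, a convex compact subset of $X$. The ``moreover'' clause of \fref{lem:ConvexKatetovCompatible} tells me that $\xi_n^C$ coincides with the Katětov extension to $X$ of the convexification of $\xi_n \rest \tilde Y_n$, an element of $K_C(\tilde Y_n)$. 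Hence each $\xi_n^C$ already lies in the union defining $M$. A direct comparison shows $\xi \mapsto \xi^C$ is $1$-Lipschitz for the sup norm (if $\xi \leq \eta + \epsilon$ then $\xi^C - \epsilon$ is convex and lies below $\eta$, hence below $\eta^C$), so $\xi_n^C \to \xi^C$ uniformly and $\xi^C \in M$.

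The main obstacle is this last inclusion: applying $\cdot^C$ to $\xi_n$, which arises from a function on a finite (hence manifestly non-\emph{convex}) set $F_n$, does not obviously yield anything related to $K_C$ of a convex domain. The trick is to pass through the convex hull $\tilde Y_n = \Conv(F_n)$, and the ``moreover'' compatibility in \fref{lem:ConvexKatetovCompatible} between convexification and Katětov extension is exactly what is needed to justify this manoeuvre and identify $\xi_n^C$ as the canonical image in $K(X)$ of a genuine element of $K_C(\tilde Y_n)$.
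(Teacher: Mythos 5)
Your proof is correct and takes essentially the same route as the paper: the identical cycle of three inclusions, justified by the same three facts (Katětov functions on compact subsets lie in $K_0(X)$, a convex function equals its own convexification, and continuity of $\xi \mapsto \xi^C$ combined with compactness of $\Conv(F_n)$ and the ``moreover'' clause of \fref{lem:ConvexKatetovCompatible}). The only difference is that you spell out details the paper leaves implicit, namely the $\epsilon$-net argument showing $K(Y) \subseteq K_0(X)$ for compact $Y$ and the $1$-Lipschitz estimate for $\xi \mapsto \xi^C$.
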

\begin{proof}
  We argue that
  \begin{gather*}
    K_{C,0}(X) \supseteq \overline{\bigcup \bigl\{ K_C(Y)\colon \text{convex compact } Y \subseteq X \bigr\} } \supseteq \bigl\{ \xi^C\colon \xi \in K_0(X) \bigr\} \supseteq K_{C,0}(X).
  \end{gather*}
  For the first inclusion it suffices to observe that $K_0(X)$ contains $K(Y)$ for every compact $Y \subseteq X$.
  The second inclusion holds since $\xi \mapsto \xi^C$ is continuous and the convex hull of a finite set is compact.
  The third is immediate.
\end{proof}

Given two (isometric) extensions $E \subseteq F_i$, $i = 0,1$, let first $F_0 \oplus_E F_1$ denote their co-product over $E$ as vector spaces, namely $F_0 \oplus F_1$ divided by equality of the two copies of $E$.
Then there exists a greatest (compatible) norm on $F_0 \oplus_E F_1$, given by $\|a - b\| = \inf_{c \in E} \|a - c\| + \|c - b\|$.
With $F_0 = \bigl( E(x), \|{\cdot}\|^\xi \bigr)$ for some $\xi \in K_C(E)$ and $F_1 = F$ fixed, we obtain a canonical extension of $\|{\cdot}\|^\xi$ to $F(x) = E(x) \oplus_E F$, compatible with the embedding $K_C(E) \subseteq K_C(F)$.

\begin{lem}
  \label{lem:ConvexKatetovDistance}
  Let $E$ be a normed space and $\xi_i \in K_C(E)$, $i = 0,1$.
  Let also $r_0 = d(\xi_0,\xi_1)$ and $r_1 = \inf_{a \in E} \xi_0(a) + \xi_1(a)$.
  Then for every $r_0 \leq r \leq r_1$ there exists a semi-norm $\|{\cdot}\|_r$ on $E(x_0,x_1) = E \oplus \bR x_0 \oplus \bR x_1$ whose restriction to $E(x_i)$ is $\|{\cdot}\|^{\xi_i}$, such that $\|x_0-x_1\|_r = r$.
\end{lem}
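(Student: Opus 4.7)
The plan is to translate the problem into producing a single convex Katětov function on $E(x_0)$, then build it as the Katětov extension of a simple affine function on the convex hull of $E \cup \{x_0\}$. First, I would apply \fref{lem:ConvexKatetov} with the semi-normed space $F_0 := \bigl( E(x_0), \|{\cdot}\|^{\xi_0} \bigr)$ in place of $E$: this sets up a bijection between semi-norms on $F_0 \oplus \bR x_1 = E(x_0,x_1)$ extending $\|{\cdot}\|^{\xi_0}$ and convex Katětov functions $\eta \in K_C(F_0)$, via $\eta(\cdot) = \|x_1 - \cdot\|$. Under this bijection, the restriction to $E(x_1)$ equals $\|{\cdot}\|^{\xi_1}$ iff $\eta|_E = \xi_1$, and the norm of $x_0 - x_1$ equals $\eta(x_0)$; so the lemma reduces to producing $\eta \in K_C(F_0)$ with $\eta|_E = \xi_1$ and $\eta(x_0) = r$.

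To build $\eta$, let $C = \Conv(E \cup \{x_0\}) \subseteq F_0$ and define $f\colon C \to \bR$ by $f\bigl( (1-t)a + tx_0 \bigr) = (1-t)\xi_1(a) + tr$; this is well-defined, the representation being unique for $t < 1$ and evaluating to $r$ independently of $a$ when $t = 1$. Take $\eta = \hat f$, the standard Katětov extension of $f$ from $C$ to $F_0$. Granting that $f$ is convex Katětov on $C$, the first item of \fref{lem:ConvexKatetovCompatible} yields $\eta \in K_C(F_0)$, while $\eta|_C = f$ by the defining property of the Katětov extension applied to a Katětov function. In particular $\eta|_E = \xi_1$ and $\eta(x_0) = r$, completing the construction.

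The remaining work is to verify that $f$ is indeed convex and Katětov on $C$. Convexity is a routine computation from convexity of $\xi_1$. The upper Lipschitz inequality $|f(y) - f(y')| \leq \|y - y'\|^{\xi_0}$ reduces by convexity to the edge case $y = x_0$, $y' = a \in E$, where the two-sided bound $\xi_1(a) - \xi_0(a) \leq r \leq \xi_1(a) + \xi_0(a)$ is precisely the hypothesis $r_0 \leq r \leq r_1$ instantiated at $a$. The main obstacle is the lower Katětov inequality $\|y - y'\|^{\xi_0} \leq f(y) + f(y')$: with $y = (1-t)a + tx_0$, $y' = (1-t')a' + t'x_0$, assuming $t \geq t'$, and writing $s := t - t'$, one computes $\|y - y'\|^{\xi_0} = s\, \xi_0(a'')$ where $a'' := \bigl[ (1-t')a' - (1-t)a \bigr]/s \in E$ (the case $s = 0$ follows directly from $\xi_1$ being Katětov). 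I would then chain: upper Katětov of $\xi_0$ gives $\xi_0(a'') \leq \xi_0(a') + \|a'' - a'\|$; a direct calculation yields the identity $\|a'' - a'\| = (1-t)\|a - a'\|/s$; lower Katětov of $\xi_1$ bounds $\|a - a'\| \leq \xi_1(a) + \xi_1(a')$; and the hypothesis $r \geq r_0$ gives $\xi_0(a') \leq \xi_1(a') + r$. Assembling these estimates collapses to $s\,\xi_0(a'') \leq (1-t)\xi_1(a) + (1-t')\xi_1(a') + (t+t')r = f(y) + f(y')$, using only $t' \geq 0$.
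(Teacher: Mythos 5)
Your opening reduction is sound, and it is a genuinely nice reformulation: \fref{lem:ConvexKatetov} does apply verbatim to the semi-normed space $F_0 = \bigl( E(x_0), \|{\cdot}\|^{\xi_0} \bigr)$, so it suffices to produce $\eta \in K_C(F_0)$ with $\eta\rest_E = \xi_1$ and $\eta(x_0) = r$. The fatal gap is in your construction of $\eta$: the claim that the Lipschitz (upper Katětov) inequality for $f$ ``reduces by convexity to the edge case'' is false. For a convex function the slope between two boundary points is an \emph{average}, and can be strictly smaller than slopes between interior pairs (already on $[0,1]$ the convex function $\max(0,2x-1)$ has endpoint-to-endpoint slope $1$ but interior slope $2$), so checking pairs in $E \times E$ and $\{x_0\} \times E$ gives no control over general pairs in $C$. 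Indeed your $f$ is genuinely not Katětov: take $E = \bR$ and $\xi_0 = \xi_1 = \xi$ with $\xi(a) = \max(|a|,1)$, a convex Katětov function (the distance to the point $(0,1)$ in $(\bR^2,\|{\cdot}\|_\infty)$). Then $r_0 = 0$, $r_1 = 2$; take $r = 2$. Here $F_0$ is $\bR^2$ with the max norm, $E = \bR \times \{0\}$, $x_0 = (0,1)$, and your formula gives $f(e + \tau x_0) = \max(|e|,1-\tau) + 2\tau$ on $C$. With $y = 1 + \frac{3}{4}x_0$ (i.e.\ $t = \frac{3}{4}$, $a = 4$) and $y' = 0$ one gets $f(y) - f(y') = \frac{5}{2} - 1 = \frac{3}{2}$, while $\|y-y'\|^{\xi_0} = \max\bigl(1,\frac{3}{4}\bigr) = 1$; equivalently, $f$ violates the triangle inequality $f(y) \leq \|y - a\|^{\xi_0} + \xi_1(a)$ at $a = 1 \in E$, since $\frac{3}{4} + 1 < \frac{5}{2}$. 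Note that both of your edge cases \emph{do} hold in this example (as they must whenever $r_0 \leq r \leq r_1$), so this refutes the reduction itself, not just the edge verification.

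The conceptual problem is that the affine interpolation is the \emph{largest} convex function on $C$ with the prescribed values on $E \cup \{x_0\}$, whereas a Katětov function must also lie below $\inf_{a \in E} \bigl( \|{\cdot} - a\|^{\xi_0} + \xi_1(a) \bigr)$; for $r$ near $r_1$ the affine formula overshoots this bound at points near $x_0$ whose ``foot'' $a$ is far away, exactly as above. (Your verification of the lower Katětov inequality is correct, but it only uses $r \geq r_0$; it is the upper inequality that constrains the construction, and the affine candidate cannot satisfy it.) A repair inside your framework essentially recovers the paper's proof: for $r = r_1$ take $\eta$ to be the Katětov extension of $\xi_1$ from $E$ to $F_0$, namely $\eta(w) = \inf_{a \in E} \bigl( \|w-a\|^{\xi_0} + \xi_1(a) \bigr)$, which is convex Katětov by \fref{lem:ConvexKatetovCompatible} and satisfies $\eta(x_0) = r_1$ (this is the paper's greatest compatible norm on $E(x_0) \oplus_E E(x_1)$); for $r = r_0$ a genuinely different construction is required (the paper invokes Henson's amalgamation formula); and intermediate values of $r$ are then obtained by convex combinations, since $K_C(F_0)$ is closed under them and $\eta \mapsto \bigl( \eta\rest_E, \eta(x_0) \bigr)$ is affine — which is precisely the three-step structure of the paper's own proof.
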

\begin{proof}
  When $r = r_1$, just take the greatest norm on $E(x_0) \oplus_E E(x_1)$ compatible with $\|{\cdot}\|^{\xi_i}$ as above.
  When $r = r_0$, this is a special case of an amalgamation result of Henson (see \cite{BenYaacov-Henson:Gurarij}).
  Indeed, for $a \in \alpha x_0 + \beta x_1 \in E(x_0,x_1)$ define
  \begin{gather*}
    \| a + \alpha x_0 + \beta x_1 \|' = \inf_{b \in E, \gamma \in \bR} \| b + (\alpha + \gamma) x_0\|^{\xi_0} + \| a-b + (\beta - \gamma) x_1 \|^{\xi_1} + |\gamma| r_0.
  \end{gather*}
  This is clearly a semi-norm, and $\|x_0 - x_1\|' \leq r_0$.
  For all $a + \alpha x_0 \in E(x_0)$ and $b,\gamma$, we have, by choice of $r_0$,
  \begin{gather*}
    \|a + \alpha x_0\|^{\xi_0} \leq \| b + (\alpha + \gamma) x_0\|^{\xi_0} + \|a-b - \gamma x_1\|^{\xi_1} + |\gamma| r_0,
  \end{gather*}
  so $\|{\cdot}\| = \|{\cdot}\|^{\xi_0}$ on $E(x_0)$, and similarly $\|{\cdot}\| = \|{\cdot}\|^{\xi_1}$ on $E(x_1)$.
  It follows that $\|x_0 - x_1\|' = r_0$, as desired.

  Intermediate values can be obtained as convex combinations of the two extreme cases.
\end{proof}

\section{The Arens-Eells space over a normed space}
\label{sec:ArensEells}

\begin{ntn}
  Given two pointed metric spaces $(X,0)$ and $(Y,0)$, we let $\Lip_0(X,Y)$ denote the space of all Lipschitz functions $\theta\colon X \rightarrow Y$ which send $0 \mapsto 0$, and for $\theta \in \Lip_0(X,Y)$ (or for that matter, for any Lipschitz function $\theta$) we let $L(\theta)$ denote its Lipschitz constant.
  If $Y = \bR$, we omit it.
\end{ntn}

We recall the following facts from Weaver \cite[Chapter~2.2]{Weaver:LipschitzAlgebras}:

\begin{fct}
  \label{fct:ArensEells}
  Let $(X,0)$ be a pointed metric space.
  Then there exists a Banach space $\AEx(X)$, together with an isometric embedding $X \subseteq \AEx(X)$ sending $0 \mapsto 0$, called the \emph{Arens-Eells space} of $X$, having the following universal property: every $\theta \in \Lip_0(X,F)$, where $F$ is a Banach space, admits a unique continuous linear extension $\theta'\colon \AEx(X) \rightarrow F$, and this unique extension satisfies $\|\theta'\| \leq L(\theta)$.

  This universal property characterises the Arens-Eells space up to a unique isometric isomorphism.
  Its dual Banach space $\AEx(X)^*$ is canonically isometrically isomorphic to $\Lip_0(X)$, the isomorphism consisting of sending a linear functional to its restriction to $X$.
\end{fct}

We shall generalise this as follows:

\begin{dfn}
  Let $E$ be a fixed normed space.
  By a \emph{metric space over $E$} we mean a pair $(X,\varphi)$ where $X$ is a metric space, $\varphi\colon E\rightarrow X$ is isometric, and for each $x \in X$ the function $a \mapsto d(\varphi a,x)$ is convex.
  Most of the time $\varphi$ will just be an inclusion map, in which case it is replaced with $E$ or simply omitted.

  For a normed space $F$ we also define $\Lip_E(X,F)$ to consist of all Lipschitz functions $\theta\colon X \rightarrow F$ which are linear on $E$, and if $F = \bR$ then we omit it.
\end{dfn}

For the trivial normed space $0$, a metric space over $0$ is the same thing as a pointed metric space.
Also, every isometric inclusion of normed spaces $E \subseteq F$ renders $F$ metric over $E$.

\begin{thm}
  \label{thm:RelativeArensEells}
  Let $X$ be a metric space over a normed space $E$.
  Then there exists a Banach space $\AEx(X,E)$, together with an isometric embedding $X \subseteq \AEx(X,E)$ which is linear on $E$, having the following universal property: every $\theta \in \Lip_E(X,F)$, where $F$ is a Banach space, admits a unique continuous linear extension $\theta'\colon \AEx(X,E) \rightarrow F$, and this unique extension satisfies $\|\theta'\| \leq L(\theta)$.

  This universal property characterises $\AEx(X,E)$ up to a unique isometric isomorphism, and we shall call it the \emph{Arens-Eells space of $X$ over $E$}.
  Its dual $\AEx(X,E)^*$ is isometrically isomorphic to $\Lip_E(X)$ via restriction to $X$.
\end{thm}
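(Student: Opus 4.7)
My plan is to construct $\AEx(X, E)$ as a quotient of the classical Arens-Eells space of $X$ (pointed at $0 \in E$), reducing the theorem to the standard case plus one nontrivial isometry lemma. Write $[x]$ for the image of $x \in X$ in $\AEx(X)$, and let $N \subseteq \AEx(X)$ be the closed linear subspace spanned by the elements $[a+b] - [a] - [b]$ and $[\lambda a] - \lambda [a]$ for all $a, b \in E$ and $\lambda \in \bR$. Define $\AEx(X, E) = \AEx(X)/N$, which is a Banach space, and the induced map $X \to \AEx(X, E)$ is linear on $E$ by construction.

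The main step is to verify that this map is still \emph{isometric} on $X$. The inequality $\|[x] - [y]\| \le d(x, y)$ is immediate from the quotient; for the reverse, it suffices, for each pair $x, y \in X$, to produce a continuous linear functional on $\AEx(X, E)$ of norm at most $1$ taking value $d(x, y)$ on $[x] - [y]$. By the classical duality $\AEx(X)^* \cong \Lip_0(X)$, such functionals correspond to $f \in \Lip_0(X)$ vanishing on the generators of $N$, namely to those $f$ satisfying $f(a+b) = f(a) + f(b)$ and $f(\lambda a) = \lambda f(a)$ on $E$ — precisely the elements of $\Lip_E(X)$. So the task reduces to exhibiting $f \in \Lip_E(X)$ with $L(f) \le 1$ and $f(x) - f(y) = d(x, y)$. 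The hypothesis that $X$ is metric \emph{over} $E$ enters here: \fref{lem:ConvexKatetovDistance}, applied to the convex Katětov functions $d(\cdot, x), d(\cdot, y) \in K_C(E)$ with $r = d(x, y)$ (which lies in the allowed interval $[r_0, r_1]$ by the triangle inequality), yields a norm on $E(x, y) = E \oplus \bR x \oplus \bR y$ extending $\|\cdot\|_E$, realising the correct distances to $x$ and $y$, and with $\|x - y\| = d(x, y)$. Hahn-Banach then produces $\lambda \in E(x, y)^*$ of norm at most $1$ with $\lambda(x) - \lambda(y) = d(x, y)$, whose restriction to $\{x, y\} \cup E \subseteq X$ is $1$-Lipschitz and linear on $E$. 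Extending this restriction by the McShane formula $f(z) = \inf_{w \in \{x, y\} \cup E} [\lambda(w) + d(z, w)]$ gives a $1$-Lipschitz function on $X$; a short verification using the $1$-Lipschitzness of $\lambda$ shows that $f$ still agrees with $\lambda$ on $\{x, y\} \cup E$, so $f \in \Lip_E(X)$ with $f(x) - f(y) = d(x, y)$, as required.

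The remaining claims are routine. For the universal property, given $\theta \in \Lip_E(X, F)$, \fref{fct:ArensEells} produces $\tilde\theta : \AEx(X) \to F$ with $\|\tilde\theta\| \le L(\theta)$, and since $\theta$ is linear on $E$, $\tilde\theta$ vanishes on the generators of $N$ and descends to the desired $\bar\theta : \AEx(X, E) \to F$; uniqueness of $\bar\theta$ follows from the density of the span of (the image of) $X$. Uniqueness of $\AEx(X, E)$ up to unique isometric isomorphism is the standard categorical consequence. For the dual, $\AEx(X, E)^* = N^\perp \subseteq \AEx(X)^* \cong \Lip_0(X)$, which the same annihilator computation identifies isometrically with $\Lip_E(X)$ via restriction to $X$. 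The main obstacle is the isometry step; it genuinely uses the convexity of $a \mapsto d(a, x)$, without which the quotient $\AEx(X)/N$ could easily collapse distances on $X$.
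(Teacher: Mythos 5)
Your proposal is correct and follows essentially the same route as the paper: quotient the classical Arens-Eells space by the closed subspace generated by the linearity relations on $E$, then establish isometry by combining \fref{lem:ConvexKatetovDistance} (with $r = d(x,y) \in [r_0,r_1]$ by the triangle inequality) with Hahn-Banach and a norm-preserving Lipschitz extension to all of $X$. The only cosmetic differences are that you spell out the McShane extension formula where the paper cites Weaver's Theorem~1.5.6, and you include the scalar-homogeneity generators in $N$ explicitly where the paper observes they are already generated by the additivity relations.
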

\begin{proof}
  Let $\iota\colon E \rightarrow \AEx(X)$ denote the inclusion map, so that we may distinguish between algebraic operations in $E$ and in $\AEx(X)$.
  Let $F_0 \subseteq \AEx(X)$ be the closed subspace generated by all expressions of the form $\iota(a + b) - \iota a - \iota b$ for $a,b \in E$, observing that it also contains $\iota \alpha a - \alpha \iota a$ for $a \in E$ and $\alpha \in \bR$.
  Define $\AEx(X,E)$ as the completion of the quotient $\AEx(X)/F_0$, and let $\psi \colon X \rightarrow \AEx(X,E)$ be the natural map.
  Then $\psi$ is linear and $1$-Lipschitz, and both the universal property and characterisation of the dual hold, so all we need to show is that $\psi$ is isometric.

  Indeed, let $x_i \in X$, $i = 0,1$, and let $\xi_i = d(\cdot,x_i) \in K_C(E)$.
  Let $r_0 \leq r_1$ be as in \fref{lem:ConvexKatetovDistance}, and observe that by the triangle inequality $r_0 \leq d(x_0,x_1) \leq r_1$.
  Therefore, by \fref{lem:ConvexKatetovDistance} there exists a semi-norm $\|{\cdot}\|$ on $E(x_0,x_1)$ which induces the same distance on $E\cup\{x_0,x_1\}$ as $X$ does.
  By the Hahn-Banach Theorem, there exists a linear functional $\lambda \in E(x_0,x_1)^*$ which norms $x_0 - x_1$: $\|\lambda\| = 1$ and $\lambda(x_0 - x_1) = d(x_0,x_1)$.
  The restriction of $\lambda$ to $E\cup \{x_0,x_1\}$ is $1$-Lipschitz, and by \cite[Theorem~1.5.6]{Weaver:LipschitzAlgebras}, it extends to a $1$-Lipschitz function $\lambda'\colon X \rightarrow \bR$.
  Then $\lambda' \in \Lip_E(X) = \AEx(X,E)^* \subseteq \Lip_0(X)$ and $\|\lambda'\| = 1$, witnessing that $\|\psi x_0 - \psi x_1\| \geq d(x_0,x_1)$, completing the proof.
\end{proof}

Similarly, it is shown in \cite{Weaver:LipschitzAlgebras} that if $Y \subseteq X$ then $\AEx(Y)$ is naturally identified with the subspace of $\AEx(X)$ generated by $Y$, and the same can be deduced for metric spaces over a normed space $E$.

\begin{cor}
  \label{cor:EmbedKC}
  Let $E$ be a normed space and $E \subseteq X \subseteq K_C(E)$ (here we identify $E$ with its image in $K_C(E)$).
  Then there exists a Banach space $E[X]$ together with an isometric embedding $X \subseteq E[X]$ such that $E \subseteq E[X]$ is a Banach space extension, and the following universal property holds: every Lipschitz map $\theta\colon X \rightarrow F$, where $F$ is a Banach space, which is linear on $E$, admits a unique continuous linear extension $\theta'\colon E[X] \rightarrow F$ with $\|\theta'\| \leq L(\theta)$.

  In particular, $E[X]$ is uniquely determined by this universal property, and is generated as a Banach space by $X$.
\end{cor}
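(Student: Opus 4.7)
The plan is to identify $E[X]$ with the Arens-Eells space $\AEx(X,E)$ furnished by \fref{thm:RelativeArensEells}, so that the corollary reduces to two things: checking that $X$, with the sup metric inherited from $K_C(E)$, qualifies as a metric space over $E$ in the sense of the definition preceding that theorem, and then rereading the conclusions of \fref{thm:RelativeArensEells} as the conclusions of the corollary. The embedding $E \hookrightarrow X$ is just the identification of $a \in E$ with $\|a-\cdot\| \in K_C(E)$, which is well-known to be isometric.

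The only substantive verification is that for each fixed $\xi \in X$, the function $a \mapsto d(a,\xi)$ on $E$ is convex. Once the identification $a \leftrightarrow \|a-\cdot\|$ is unwound, this distance is
\begin{gather*}
  d(a,\xi) = \sup_{b \in E} \bigl| \|a-b\| - \xi(b) \bigr|.
\end{gather*}
The two forms of the Katětov inequalities bound both $\|a-b\|-\xi(b)$ and $\xi(b)-\|a-b\|$ above by $\xi(a)$, while the bound $\xi(a)$ is attained at $b=a$; hence $d(a,\xi) = \xi(a)$. Since $\xi \in K_C(E)$ is convex, so is $a \mapsto d(a,\xi)$. This little computation is the only non-formal part of the argument, and I do not expect any real obstacle beyond bookkeeping.

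With that verification, \fref{thm:RelativeArensEells} produces a Banach space $E[X] := \AEx(X,E)$ together with an isometric embedding $X \hookrightarrow E[X]$ which is linear on $E$. Composing with the isometric linear inclusion $E \hookrightarrow X$ exhibits $E$ as a Banach subspace of $E[X]$, so $E \subseteq E[X]$ is indeed a Banach-space extension. The universal property of \fref{thm:RelativeArensEells} for $\Lip_E(X,F)$ is verbatim the one asserted, and uniqueness of $E[X]$ up to unique isometric isomorphism is formal from the universal property (the standard back-and-forth using unique extensions applied to the two embeddings of $X$). Finally, $E[X]$ is generated as a Banach space by $X$: $\AEx(X)$ is generated by $X$, the quotienting subspace $F_0$ used in the construction of $\AEx(X,E)$ is contained in the closed linear span of $X$ (its generators involve only elements of $E \subseteq X$), and completing a subspace generated by $X$ yields a Banach space still generated by $X$.
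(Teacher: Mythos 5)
Your proposal is correct and takes essentially the same route as the paper, whose proof is the single line ``just take $E[X] = \AEx(X,E)$'', invoking \fref{thm:RelativeArensEells}. Your verification that $X$ is a metric space over $E$ --- via the identity $d(a,\xi) = \xi(a)$ and convexity of $\xi$ --- is exactly the point the paper leaves implicit, and it is carried out correctly.
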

\begin{proof}
  Just take $E[X] = \AEx(X,E)$.
\end{proof}

\section{Main Theorem}
\label{sec:Main}

We recall (say from Lusky \cite{Lusky:UniqueGurarij}) the definition of the Gurarij space.

\begin{dfn}
  \label{dfn:Gurarij}
  A \emph{Gurarij space} is a separable Banach space $\bG$ having the property that for any $\varepsilon > 0$, finite-dimensional Banach space $E \subseteq F$, and isometric embedding $\psi\colon E \rightarrow \bG$, there is a linear embedding $\varphi\colon F \rightarrow \bG$ extending $\psi$ which is, in addition, \emph{$\varepsilon$-isometric}, namely $(1 - \varepsilon)\|x\| \leq \|\varphi x\| \leq (1 + \varepsilon)\|x\|$ for all $x \in F$.
\end{dfn}

\begin{fct}
  \label{fct:UniqueGurarij}
  The Gurarij space $\bG$ exists and is unique.
\end{fct}
\begin{proof}
  Existence (which also follows from \fref{lem:MainConstruction} below) is due to Gurarij \cite{Gurarij:UniversalPlacement}, as is almost isometric uniqueness.
  Isometric uniqueness is due to Lusky \cite{Lusky:UniqueGurarij}, with a recent plethora of more ``elementary'' (that is to say, in the author's view, more model-theoretic) proofs \cite{Kubis-Solecki:GurariiUniqueness,BenYaacov-Henson:Gurarij,BenYaacov:MetricFraisse}.
\end{proof}

\begin{dfn}
  \label{dfn:MainConstruction}
  For a Banach space $E$ we define
  \begin{itemize}
  \item $E' = E[ K_{C,0}(E) ]$ as per \fref{cor:EmbedKC},
  \item $E_0 = E$, $E_{n+1} = E_n'$, $E_\omega = \overline{ \bigcup E_n }$.
  \end{itemize}
\end{dfn}

In order to show that this construction yields a Gurarij space, it will be convenient to use the following, which is a special case of an unpublished result of Henson:

\begin{fct}[See also \cite{BenYaacov-Henson:Gurarij}]
  \label{fct:BanachSpaceDistance}
  Let $E$ and $F$ be normed spaces, let $\bar x \in E^k$ and $\bar y \in F^k$, and let
  \begin{gather*}
    r = \sup_{\sum |s_i| = 1} \left| \bigl\| \sum s_i x_i \bigr\|_E - \bigl\| \sum s_i y_i \bigr\|_F \right|.
  \end{gather*}
  Then $r$ is the least real number for which exists a normed space $E_1$ and isometric embeddings $E \subseteq E_1$, $F \subseteq E_1$, such that $\|x_i-y_i\| \leq r$ for all $i$.
\end{fct}
\begin{proof}
  Let $E_1 = E \oplus F$ as vector spaces, and consider the maximal function $\|{\cdot}\|'\colon E_1 \rightarrow \bR$ satisfying, for all $x \in E$, $y \in F$ and $\bar s \in \bR^n$:
  \begin{gather*}
    \left\| x + y + \sum s_i(x_i - y_i) \right\|' \leq \|x\|_E + \|y\|_F + r \sum |s_i|.
  \end{gather*}
  This is clearly a semi-norm on $E$, and $\|x_i - y_i\|' \leq r$.
  For $x \in E$ we have $\|x\|' \leq \|x\|_E$, while on the other hand, for any $\bar s$ we have by choice of $r$:
  \begin{align*}
    \|x\|_E
    &
    \leq \left\| x - \sum s_i x_i \right\|_E + \left\| \sum s_i x_i \right\|_E
    \\ &
    \leq \left\| x - \sum s_i x_i \right\|_E + \left\| \sum s_i y_i \right\|_F + r \sum |s_i|.
  \end{align*}
  It follows that $\|x\|' = \|x\|_E$, and similarly for $y \in F$, whence the desired amalgam.
  It is clear that $r$ is least.
\end{proof}

\begin{lem}
  \label{lem:MainConstruction}
  Let $E$ be a Banach space.
  Then $E'$ and $E_\omega$ are Banach space extensions of $E$ with $w(E) = w(E') = w(E_\omega)$, and $E_\omega$ satisfies the second property of \fref{dfn:Gurarij}.
  In particular, if $E$ is separable then $E_\omega$ is a Gurarij space.
\end{lem}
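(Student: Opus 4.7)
That $E'$ and $E_\omega$ are Banach space extensions of $E$ is immediate from \fref{cor:EmbedKC} and the definition of $E_\omega$ as the completion of $\bigcup_n E_n$. For the weight claim, \fref{lem:ConvexKatetovZero} presents $K_{C,0}(E_n)$ as the sup-norm closure of $K_C(Y)$ over compact convex $Y \subseteq E_n$; taking $Y$ to range over convex hulls of finite subsets of a dense set of size $w(E_n)$ suffices, so $w(K_{C,0}(E_n)) \leq w(E_n)$. Combined with \fref{cor:EmbedKC}, this propagates $w(E_n) = w(E)$ to $E_\omega$, so in particular separability of $E$ transfers to $E_\omega$.

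The main work is the Gurarij property. Reducing by induction on $\dim F - \dim E$, it suffices to treat a one-dimensional extension $F = E + \bR x$; by \fref{lem:ConvexKatetov} the new norm is entirely encoded by $\xi := \|x - {\cdot}\|_F \in K_C(E)$. Given $\varepsilon > 0$ and isometric $\psi\colon E \to E_\omega$, choose $\delta > 0$ small depending on $\varepsilon$, on $\dim E$, and on a bound $M$ for $\|a\|$ over the unit sphere of $F$ (such $M$ exists by finite-dimensionality). Using the density of $\bigcup E_n$ in $E_\omega$ and an Auerbach basis of $E$, approximate $\psi$ by a linear $\psi'\colon E \to E_n$ that is $(1+\delta)$-bi-Lipschitz and satisfies $\|\psi(a) - \psi'(a)\| \leq \delta\|a\|$. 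By \fref{fct:BanachSpaceDistance}, assemble an amalgam $G$ containing isometric copies of both $F$ and $E_n$ along the near-isomorphism $\psi'$, with $\|e - \psi'(e)\|_G = O(\delta)$ for $e$ in the unit ball of $E$.

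Now define $\xi'(b) := \|x - b\|_G$ for $b \in E_n$, a convex Katětov function on $E_n$. Because $x$ is attached to $E_n$ in $G$ only through the finite-dimensional subspace $\psi'(E)$, $\xi'$ is determined up to $O(\delta)$-error by its restriction to $\psi'(E)$ via the Katětov extension formula, and this restriction is asymptotically linear on rays. Restricting $\xi'$ to a large compact convex ball $Y_R = \psi'(E) \cap R\bar B$ and re-extending via Katětov yields a uniform sup-norm approximation to $\xi'$ on all of $E_n$, so $\xi' \in K_{C,0}(E_n)$. By \fref{cor:EmbedKC} and \fref{lem:ConvexKatetov} there is then $y \in E_{n+1} \subseteq E_\omega$ with $\|y - b\|_{E_\omega} = \xi'(b)$ for every $b \in E_n$. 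Setting $\varphi(a + \alpha x) := \psi(a) + \alpha y$ defines a linear extension of $\psi$; combining the identity $\|y - \psi'(b)\| = \xi'(\psi'(b))$, which by construction of $\xi'$ in $G$ agrees with $\xi(b) = \|x - b\|_F$ up to $O(\delta)\|b\|$, with $\|\psi(b) - \psi'(b)\| \leq \delta\|b\|$, yields $\bigl| \|\varphi(f)\| - \|f\|_F \bigr| = O(\delta M)$ on the unit sphere of $F$, which is $\leq \varepsilon$ for $\delta$ small enough; hence $\varphi$ is $(1+\varepsilon)$-isometric. The main obstacle is the verification that $\xi' \in K_{C,0}(E_n)$, which leverages both the finite-dimensionality of $\psi'(E)$ and the convexity of $\xi'$ to obtain uniform sup-norm approximations on the infinite-dimensional $E_n$.
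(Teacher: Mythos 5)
The architecture of your argument tracks the paper's (reduce to a one\hyphen dimensional extension, perturb into some $E_n$, amalgamate via \fref{fct:BanachSpaceDistance}, realize a convex Katětov function in $E_{n+1}$), but the step you yourself flag as the main obstacle is a genuine gap: it is \emph{false} in general that $\xi' \in K_{C,0}(E_n)$. Your justification is that restricting $\xi'$ to a ball $Y_R$ of $\psi'(E)$ and re-extending by the Katětov formula gives sup-norm approximations of $\xi'$; this uniform convergence fails for non-smooth norms, because the asymptotic defect $u \mapsto \lim_{s\to\infty}\bigl(\xi'(su)-s\bigr)$, defined on unit vectors $u$ of $\psi'(E)$, can jump. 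Concretely, take $F_1 = \bR^3$ with $\|w\| = \max\bigl(\sqrt{w_1^2+w_2^2},\, |w_1|+|w_3|\bigr)$, $F_0 = \bR^2\times\{0\}$ (on which the norm is Euclidean) and $v = e_3$, so that $\xi(c) = \max\bigl(\|c\|,\,|c_1|+1\bigr)$ on $F_0$. Then $\lim_s\bigl(\xi(su_\theta)-s\bigr)$, with $u_\theta = (\cos\theta,\sin\theta,0)$, equals $1$ at $\theta = 0$ and $0$ for every $\theta \neq 0$; a direct computation shows that for every $R$ there are $\theta$ small (of order $R^{-1/2}$) and $s$ large such that every $c$ in the ball of radius $R$ satisfies $\xi(c) + \|c - su_\theta\| \geq \xi(su_\theta) + \half$, so the re-extension exceeds $\xi$ by at least $\half$ somewhere, for every $R$. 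Moreover, Katětov extensions from arbitrary finite subsets of $E_n$ do no better than extensions from balls of $\psi'(E)$ (replace each point of the finite set by a near-optimal point of $\psi'(E)$), so the transported function genuinely lies outside $K_0(E_n)$; the $O(\delta)$ perturbation coming from the amalgam cannot absorb a defect of size $\half$, and neither finite-dimensionality of $\psi'(E)$ nor convexity closes this gap.

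This is exactly the difficulty the paper's proof is designed to sidestep, and also the reason the Gurarij property survives: one must not realize $\xi'$ itself, but the truncated function $\xi''_R$ (the extension of $\xi'\rest_{Y_R}$), which \emph{is} in $K_{C,0}(E_n)$ by \fref{lem:ConvexKatetovZero} since $Y_R$ is compact and convex. The resulting point $u \in E_{n+1}$ then has the prescribed distances only to points of $Y_R$; outside the ball one settles for a \emph{relative} estimate: since $\|u\| = \|v\| = 1$, for $\|b\| > R$ one has $\bigl|\,\|b-u\| - \|b-v\|\,\bigr| \leq 2 \leq \frac{2}{R-1}\|b-v\|$. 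Because \fref{dfn:Gurarij} asks only for an $\varepsilon$-isometry, i.e.\ a multiplicative error, taking $R$ large wins, and your final bookkeeping (the $O(\delta M)$ estimate on the unit sphere combined with $\|\psi b - \psi' b\| \leq \delta\|b\|$) goes through essentially verbatim after this change. A smaller point: your opening reduction (induction on $\dim F - \dim E$, starting from \emph{isometric} $\psi$) is not literally the statement the induction needs, since after the first step the intermediate copy is only approximately isometrically embedded; the one-dimensional statement must accept $\delta$-isometric $\psi$ as input, which is how the paper formulates it, and which your $\psi'$-perturbation machinery would in fact accommodate.
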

\begin{proof}
  Since $E[X]$ is generated over $E$ by the image of $X$, we have $w(E[X]) = w(E) + w(X)$, whence $w(E) = w(E') = w(E_\omega)$.
  For the Gurarij property, it will be enough to show that for every finite-dimensional normed space $F_1$, sub-space $F_0$ of co-dimension one, and $\varepsilon > 0$, there exists $\delta > 0$ such that every linear $\delta$-isometry $\varphi\colon F_0 \rightarrow E_\omega$ there exists a linear $\varepsilon$-isometry $\varphi\colon F_1 \rightarrow E_\omega$ extending $\psi$.
  Let $v$ generate $F_1$ over $F_0$, and say $\|v\| = 1$.

  Let us first assume that $\psi$ is isometric, with $\psi F_0 \subseteq \bigcup_n E_n$, and define $\xi \in K_C(F_0)$ by $\xi(x) = \|x-v\|$.
  Fix $R > 0$, let $X \subseteq F_0$ be the closed ball of radius $R$, and let $\xi' = \xi\rest_X \in K_C(X) \subseteq K_C(F_0)$.
  Since $X$ is compact, $\xi' \in K_{C,0}(F_0)$, and by construction there exists $u \in \bigcup_n E_n$ such that $\|\psi x-u\|_{E_\omega} = \|x-v\|_{F_1}$ for all $x \in X$.
  In particular, $\|u\| = 1$ as well, so for $x \in F_0 \setminus X$ we have $\bigl| \|\psi x - u\|_{E_\omega} - \|x - v\|_{F_1} \bigr| \leq 2 \leq \frac{2}{R - 1} \|x-v\|_{F_1}$.
  Thus the extension $\varphi\colon F_1 \rightarrow E_\omega$ given by $v \mapsto u$ is $\frac{2}{R-1}$-isometric, and we may take $R$ as large as we wish.

  Now consider the general case, and let $\bar x \subseteq F_0$ be a basis, say with $\|x_i\| = 1$.
  Then there are constants $C$ and $C'$ such that for every $x \in F_0$ and linear $\theta\colon F_0 \rightarrow F$, where $F$ extends $F_0$, we have $\|x\| \leq C\|x + v\|$ and $\|x - \theta x\| \leq C' \max_i \|x_i - \theta x_i\| \|x\|$.
  Let $\delta = \frac{\varepsilon}{6 C C' + 1 + \varepsilon}$, and let $\psi\colon F_0 \rightarrow E_\omega$ be a $\delta$-isometry.
  Then by \fref{fct:BanachSpaceDistance}, we may assume that $E_\omega$ and $F_1$ are embedded isometrically in some $F$, in which $\|x_i - \psi x_i\| \leq \delta$ for all $i$.
  We can now choose $\bar x' \in \bigcup_n E_n$ such that $\|x_i - x'_i\| < 2\delta$, and let $\theta\colon F_0 \rightarrow E_\omega$ be $x_i \mapsto x_i'$, so $\|x - \theta x\| \leq 2 \delta C C' \|x + v\|$ for all $x \in F_0$.
  By the special case above, applied to $\langle \bar x' \rangle \subseteq \langle \bar x',v \rangle$, there exists $u \in E_\omega$ such that $\bigl| \| y + u \| - \| y + v \| \bigr| < \delta \| y + v \|$ for every $y \in \langle \bar x' \rangle$.
  We define $\varphi\colon F_1 \rightarrow E_\omega$ to agree with $\id_{F_0} = \psi$ on $F_0$ and send $v \mapsto u$.
  Then for $x \in F_0$,
  \begin{align*}
    \bigl| \|\varphi (x+v)\| - \|x+v\| \bigr|
    &
    = \bigl| \|x + u\| - \|x+v\| \bigr|
    \\ &
    \leq 2\|x - \theta x\| + \bigl| \|\theta x + u\| - \|\theta x+v\| \bigr|
    \\ &
    \leq 2\|x - \theta x\| + \delta \|\theta x + v\|
    \\ &
    \leq 3\|x - \theta x\| + \delta \|x + v\|
    \\ &
    \leq \bigl[ 6 C C' + 1 \bigr] \delta \|x + v\| < \varepsilon \|x + v\|,
  \end{align*}
  as desired.
\end{proof}

We recall from Uspenskij \cite{Uspenskij:SubgroupsOfMinimalTopologicalGroups},

\begin{dfn}
  \label{dfn:gEmbedding}
  An isometric embedding of metric spaces $X \subseteq Y$ is said to be a \emph{$g$-embedding} if there exists a continuous homomorphism $\Theta\colon \Iso(X) \rightarrow \Iso(Y)$ such that $\Theta \varphi$ extends $\varphi$ for each $\varphi \in \Iso(X)$.
\end{dfn}

Notice that since the restriction map $\img \Theta \rightarrow \Iso(X)$, $\varphi \mapsto \varphi\rest_X$, is continuous as well, such a map $\Theta$ is necessarily a homeomorphic embedding.

\begin{fct}[Mazur-Ulam Theorem, {\cite[Theorem~1.3.5]{Fleming-Jamison:IsometriesOnFunctionSpaces}}]
  \label{fct:MazurUlam}
  For a normed space $E$, $\Iso(E)$ is the group of affine isometries.
  Stated equivalently, $\Iso_L(E) = \{\varphi \in \Iso(E)\colon \varphi 0 = 0\}$.
\end{fct}

It follows that an isometric embedding of Banach spaces $E \subseteq F$ is a $g$-embedding if and only if there exists a continuous homomorphism $\Theta\colon \Iso_L(E) \rightarrow \Iso_L(F)$ such that $\Theta\varphi$ extends $\varphi$ for all $\varphi \in \Iso_L(E)$.
Indeed, given $\Theta\colon \Iso(E) \rightarrow \Iso(F)$ we have $(\Theta\varphi) 0 = \varphi 0 = 0$, so $\Theta$ restricts to $\Theta'\colon \Iso_L(E) \rightarrow \Iso_L(F)$, and conversely, given $\Theta\colon \Iso_L(E) \rightarrow \Iso_L(Y)$ we can define $\Theta'\colon \Iso(E) \rightarrow \Iso(F)$ by $\Theta'\varphi = \Theta(\varphi - \varphi 0) + \varphi 0$.

\begin{lem}
  \label{lem:gEmbeddingMainConstruction}
  With $E$, $E'$ and $E_\omega$ as in \fref{dfn:MainConstruction}, the embeddings $E \subseteq E' \subseteq E_\omega$ are $g$-embeddings.
\end{lem}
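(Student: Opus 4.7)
The plan is to handle the single-step embedding $E \subseteq E'$ first, and then pass to the limit to cover $E \subseteq E_\omega$.

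For the first embedding, I would lift the action of $\Iso_L(E)$ on $E$ to $K_{C,0}(E)$, and then to $E'$ via the universal property of \fref{cor:EmbedKC}. Concretely, each $\varphi \in \Iso_L(E)$ acts on $K_C(E)$ by $\xi \mapsto \xi \circ \varphi^{-1}$. Because $\varphi^{-1}$ is an affine isometry, this action is itself isometric and preserves convexity of Katětov functions; since $\varphi$ maps compact convex subsets of $E$ onto compact convex subsets, the action also restricts to $K_{C,0}(E)$ (using the description in \fref{lem:ConvexKatetovZero}). The restriction of this action to $E \subseteq K_{C,0}(E)$ is exactly $\varphi$, so composing with the canonical inclusion $K_{C,0}(E) \hookrightarrow E'$ we obtain a Lipschitz map that is linear on $E$, which by \fref{cor:EmbedKC} extends uniquely to a linear isometry $\Theta\varphi \in \Iso_L(E')$ that restricts to $\varphi$ on $E$. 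Uniqueness of the extension makes $\Theta$ a homomorphism.

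The delicate part is continuity of $\Theta$. Since $\Theta\varphi$ is linear and $E'$ is generated as a Banach space by the image of $K_{C,0}(E)$, it suffices to show that the action of $\Iso_L(E)$ on $K_{C,0}(E)$ is continuous. By \fref{lem:ConvexKatetovZero} and an $\varepsilon/3$-argument (using that the action on $K_C(E)$ is isometric), it is enough to prove continuity at each $\xi \in K_C(Y)$ where $Y \subseteq E$ is convex compact. For such $\xi$, the Katětov extension $\hat\xi \in K_C(E)$ satisfies $\hat\xi(z) = \inf_{y \in Y} \|z - y\| + \xi(y)$, and hence for $\varphi, \varphi_0 \in \Iso_L(E)$,
\begin{gather*}
  \bigl| \hat\xi(\varphi^{-1} z) - \hat\xi(\varphi_0^{-1} z) \bigr| = \Bigl| \inf_{y \in Y} \|z - \varphi y\| + \xi(y) - \inf_{y \in Y} \|z - \varphi_0 y\| + \xi(y) \Bigr| \leq \sup_{y \in Y} \|\varphi y - \varphi_0 y\|,
\end{gather*}
uniformly in $z \in E$. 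Since $\Iso_L(E)$ is a group of $1$-Lipschitz maps, pointwise convergence implies uniform convergence on the compact set $Y$, so this bound tends to zero as $\varphi \to \varphi_0$, establishing continuity.

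For the embedding $E \subseteq E_\omega$, I would iterate the above to get continuous homomorphisms $\Theta_n\colon \Iso_L(E) \rightarrow \Iso_L(E_n)$ such that $\Theta_{n+1}\varphi$ extends $\Theta_n\varphi$ for every $\varphi$. For each $\varphi \in \Iso_L(E)$, the compatible family $\{\Theta_n\varphi\}$ assembles into a linear isometry on $\bigcup_n E_n$, which extends uniquely by density to a linear isometry $\Theta_\omega\varphi \in \Iso_L(E_\omega)$ restricting to $\varphi$ on $E$. The assignment is a homomorphism by construction, and continuity follows because $\bigcup_n E_n$ is dense in $E_\omega$ and the $\Theta_\omega\varphi$ are $1$-Lipschitz, so it suffices to verify continuity of $\varphi \mapsto \Theta_\omega\varphi(x)$ at points $x \in E_n$, which is exactly the continuity of $\Theta_n$.

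I expect the principal obstacle to be the verification that the action on $K_{C,0}(E)$ is jointly continuous; once that is in place, the Arens-Eells universal property transfers it to $E'$ mechanically, and passage to the limit is routine diagram-chasing.
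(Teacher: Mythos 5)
Your proposal is correct and follows essentially the same route as the paper: lift $\varphi \in \Iso_L(E)$ to the isometry $\xi \mapsto \xi \circ \varphi^{-1}$ of $K_{C,0}(E)$, extend to $\Iso_L(E')$ by the universal property of $E' = E[K_{C,0}(E)]$, get continuity from the fact that $K_{C,0}(E)$ generates a dense subspace of $E'$, and handle $E_\omega$ by iterating and passing to the limit. The only difference is one of detail: where the paper says the continuity of $\varphi \mapsto \varphi_1$ holds ``as in the case of $K_0(X)$'' (deferring to Uspenskij's argument), you spell out that argument explicitly via the Katětov extension formula and uniform convergence on compact sets, which is exactly the intended justification.
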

\begin{proof}
  Every $\varphi \in \Iso_L(E)$ of $E$ extends to an isometry $\varphi_1\colon \xi \mapsto \xi\circ\varphi^{-1}$ of $K_{C,0}(E)$, which, by the universal property of $E[X]$, extends uniquely to an isometry $\varphi_2 \in \Iso_L(E')$.
  The map $\varphi \mapsto \varphi_1$ is continuous as in the case of $K_0(X)$, and the map $\varphi_1 \mapsto \varphi_2$ is continuous since $K_{C,0}(E)$ generates a dense subset of $E'$.
  These are also clearly homomorphisms.
  The result for $E_\omega$ follows by induction.
\end{proof}

\begin{fct}[See Teleman's Theorem, {\cite[Theorem~2.2.1]{Pestov:WhereTo}}]
  \label{fct:Teleman}
  Every Polish group embeds homeomorphically in $\Iso_L(E)$ for some separable Banach space $E$.
\end{fct}
\begin{proof}
  Let $H$ be a Polish group.
  Then it admits a compatible left-invariant distance $d$, which we may assume is bounded by $1$.
  Adjoining a distinguished point $*$ to $H$ with $d(*,h) = 1$ (or even $\half$) for all $h \in H$ we obtain a pointed metric space $H^*$, and let $E = \AEx(H^*,*)$.
  The left action of $h \in H$ on $H^*$, fixing $*$, is an isometry, which extends by the universal property to a linear isometry $h' \in \Iso_L(E)$, and the map $h \mapsto h'$ is a continuous embedding, which is easily seen to be homeomorphic.
  (In other words, $H \subseteq E$ is a $g$-embedding inducing $H \subseteq \Iso(H) \subseteq \Iso_L(E)$.)
\end{proof}

\begin{thm}
  \label{thm:UniversalIsoG}
  The group $\Iso_L(\bG)$ of linear isometries of the Gurarij space is a universal Polish group, that is to say that every other Polish group embeds there homeomorphically.
  Moreover, every separable normed space is $g$-embeddable in $\bG$.
\end{thm}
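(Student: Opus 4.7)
The proof is essentially an assembly of the machinery already in place, so I expect no serious obstacle: the substantive work lies in \fref{lem:MainConstruction} (which produces the Gurarij space as $E_\omega$), \fref{lem:gEmbeddingMainConstruction} (which makes $E \subseteq E_\omega$ a $g$-embedding), \fref{fct:UniqueGurarij} (which identifies $E_\omega$ with $\bG$), and \fref{fct:Teleman} (Teleman's theorem).

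My plan for the universality statement is as follows. Given an arbitrary Polish group $H$, apply \fref{fct:Teleman} to obtain a separable Banach space $E$ and a homeomorphic embedding $H \hookrightarrow \Iso_L(E)$. By \fref{lem:MainConstruction}, $E_\omega$ is then a separable Gurarij space, so by \fref{fct:UniqueGurarij} we may identify $E_\omega \cong \bG$. By \fref{lem:gEmbeddingMainConstruction} (and the reformulation of the $g$-embedding property for Banach spaces via \fref{fct:MazurUlam} that is discussed just after that fact), we have a continuous homomorphism $\Theta\colon \Iso_L(E) \rightarrow \Iso_L(E_\omega) = \Iso_L(\bG)$ with $\Theta\varphi$ extending $\varphi$. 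Since $\varphi \mapsto (\Theta\varphi)\rest_E = \varphi$ is trivially continuous (this is the remark immediately following \fref{dfn:gEmbedding}), $\Theta$ is a homeomorphic embedding. Composition with the embedding of $H$ yields the desired homeomorphic embedding $H \hookrightarrow \Iso_L(\bG)$.

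For the moreover part, given a separable normed space $E$, I would pass to its completion $\hat E$, which is a separable Banach space: the inclusion $E \subseteq \hat E$ is trivially a $g$-embedding, since every linear isometry of $E$ admits a unique continuous extension to $\hat E$, and the extension map is continuous. Then \fref{lem:gEmbeddingMainConstruction} gives a $g$-embedding $\hat E \subseteq \hat E_\omega \cong \bG$. Composing these (using that $g$-embeddings compose, as their associated homomorphisms do) produces a $g$-embedding $E \hookrightarrow \bG$.

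The only care needed is to keep track of the fact that composing the \emph{group-level} continuous homomorphisms (those arising from $g$-embeddings) with the \emph{group-level} homeomorphic embedding supplied by Teleman's theorem yields a homeomorphic embedding, which as noted above follows from the observation that for a $g$-embedding $E \subseteq F$ the associated $\Theta\colon \Iso_L(E) \rightarrow \Iso_L(F)$ is automatically a homeomorphic embedding. Once this is in place, the proof reduces to one paragraph of diagram-chasing.
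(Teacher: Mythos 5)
Your proposal is correct and follows essentially the same route as the paper, whose own proof is the one-line observation that the moreover part follows from \fref{lem:MainConstruction}, \fref{fct:UniqueGurarij} and \fref{lem:gEmbeddingMainConstruction}, and then yields universality via \fref{fct:Teleman}. Your write-up merely makes explicit the details the paper leaves implicit (the completion step for non-complete normed spaces, the composability of $g$-embeddings, and the automatic homeomorphic-embedding property of $\Theta$), all of which are handled correctly.
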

\begin{proof}
  The moreover part follows immediately from earlier results, and implies the main assertion by \fref{fct:Teleman}.
\end{proof}

\providecommand{\bysame}{\leavevmode\hbox to3em{\hrulefill}\thinspace}

\end{document}